\newcommand*\mymatrixbraceleft[4][m]{
    \draw[mymatrixbrace] (#1.east|-#1-#2-1.north east) -- node[right=2pt] {#4} (#1.east|-#1-#3-1.south east);
}
\newcommand*\mymatrixbracetop[4][m]{
    \draw[mymatrixbrace] (#1.north-|#1-1-#2.north west) -- node[above=2pt] {#4} (#1.north-|#1-1-#3.north east);
}
\def\C{\mathcal{C}}
\def\I{\mathcal{I}}
\def\J{\mathcal{J}}
\def\K{\mathcal{K}}
\def\KK{\mathbb{K}}
\def\V{\mathcal{V}}
\def\Re{\mathbb{R}}
\DeclareMathOperator{\codim}{codim}
\DeclareMathOperator{\rank}{rank}
\newtheorem{theorem}{Theorem}
\newtheorem{example}[theorem]{Example}
\newtheorem{lemma}[theorem]{Lemma}
\newtheorem{definition}[theorem]{Definition}
\newtheorem{question}[theorem]{Question}
\newtheorem{conjecture}[theorem]{Conjecture}
\newtheorem{proposition}[theorem]{Proposition}
\newcommand{\hconcat}{\begin{bmatrix} U & \begin{matrix} X_1+Y_1 & X_2 & 0\\ 0 & Y_4&  X_{t+1:n,1:d} \end{matrix} \end{bmatrix}}
\newcommand{\vconcat}{\begin{bmatrix} 
U_{1:t,1:d} &  X_1+Y_1 & X_2 \\ 
U_{t+1:n,1:d} & 0 & Y_4 \\ 
0 & X_3 & X_4 
\end{bmatrix}}
\begin{document}

\title{Toeplitz Unlabeled Sensing: Algebraic Theory}

\author{Xin Hong \, \, \, \, \, \, Manolis C. Tsakiris}
\thanks{Part of the results in this paper have been announced in the short expository paper \cite{ICASSP26}.}
\thanks{This research is partially supported by the National Key R$\&$D Program of China (2023YFA1009402) and by the CAS Project for Young Scientists in Basic Research (YSBR-
034).
}
\address{State Key Laboratory of Mathematical Sciences \\ Academy of Mathematics and Systems Science \\ Chinese Academy of Sciences \\ 100109, Beijing, hongxin@amss.ac.cn, manolis@amss.ac.cn}

\begin{abstract}
Unlabeled sensing is the inverse problem of recovering an element of a vector subspace of $\mathbb{R}^n$, from its image under an unknown permutation of the coordinates and knowledge of the subspace. Here we study this problem from an algebraic point of view for the special class of subspaces that admit a Toeplitz basis. 
\end{abstract}

\maketitle

%%%%%%%%
\section{Introduction}

Let $\V$ be a $d$-dimensional linear subspace of $\mathbb{R}^n$, let $v \in \V$, $\pi: \mathbb{R}^n \rightarrow \mathbb{R}^n$ a permutation of the coordinates of $\mathbb{R}^n$, and $\rho: \mathbb{R}^n \rightarrow \mathbb{R}^n$ a coordinate projection (viewed as an endomorphism). In their seminal work \cite{Unnikrishnan-Allerton2015,Unnikrishnan-TIT18} the authors posed and studied the \emph{unlabeled sensing} problem, which concerns the recovery of $v$ from the data $\rho \pi(v), \, \V$. Their main result was that if $2 d \le r$, where $r$ is the number of coordinates preserved by the coordinate projection $\rho$, and if $\V$ is sufficiently generic, then unique recovery is possible. This result was generalized in \cite{tsakiris2018eigenspace,Peng-ACHA-21} as \emph{homomorphic sensing}, where the authors replaced $\rho \pi$ by an arbitrary endomorphism of $\mathbb{R}^n$, obtaining analogous statements valid for a generic linear subspace $\V$. 

On the other hand, the subspaces $\V$ that occur in applications often have special structure, as dictated by the nature of the application. In this paper we are motivated by the potential application of unlabeled sensing to signal processing and control systems, where, in a system identification scenario, the output signal of some linear time-invariant filter is only available up to a permutation of its samples. Viewed as an element of $\Re^n$ (in a finite interval in discrete time), the output signal $v$ is constrained to lie in the column-space $\V$ of a Toeplitz matrix, the rows of which correspond to input regressor vectors \cite{Oppenheim}. We note that the theorem of \cite{Unnikrishnan-TIT18} can not be applied directly, because the set of $d$-dimensional linear subspaces of $\Re^n$ that can be represented by an $n \times d$ Toeplitz matrix is an $(n+d-2)$-dimensional subvariety of the $d(n-d)$-dimensional Grassmannian variety $\operatorname{Gr}(d,n)$ \cite[Remark 4.15]{HankelPlanes}. 
We demonstrate this important point with an example. 

\begin{example} \label{ex:6x3}
Consider the following cyclic permutation $\Psi$ and matrix of variables $A$, with $n=6$ and $d=3$:

$$\Psi = \begin{bmatrix}
0 & 1 & 0 & 0 & 0 & 0 \\
0 & 0 & 1 & 0 & 0 & 0 \\
0 & 0 & 0 & 1 & 0 & 0 \\
0 & 0 & 0 & 0 & 1 & 0 \\
0 & 0 & 0 & 0 & 0 & 1 \\
1 & 0 & 0 & 0 & 0 & 0\\
\end{bmatrix}, \, \, \, A = \begin{bmatrix}
a_{11} & a_{12} & a_{13} \\
a_{21} & a_{22} & a_{23} \\
a_{31} & a_{32} & a_{33} \\
a_{41} & a_{42} & a_{43} \\
a_{51} & a_{52} & a_{53} \\
a_{61} & a_{62} & a_{63} \\
\end{bmatrix}. $$

According to \cite{Unnikrishnan-TIT18}, there are no $3$-dimensional vectors $\xi_1 \neq \xi_2$ with entries rational functions in the variables $\{a_{ij}\}$ such that $A \xi_1 = \Psi A \xi_2$. Instead, consider now the $6 \times 3$ Toeplitz matrix of variables 
\begin{align*}
A = \begin{bmatrix}
a_{0} & a_{-1} & a_{-2} \\
a_{1} & a_{0} & a_{-1} \\
a_{2} & a_{1} & a_{0} \\
a_{3} & a_{2} & a_{1} \\
a_{4} & a_{3} & a_{2} \\
a_{5} & a_{4} & a_{3} \\
\end{bmatrix}. \label{eq:Toeplitz-variables}
\end{align*} With $\beta := a_{-2}-a_4$, and $\gamma := a_5 - a_{-1},$
we have the identity
\begin{align*}
\underbrace{\begin{bmatrix}
a_{0} & a_{-1} & a_{-2} \\
a_{1} & a_{0} & a_{-1} \\
a_{2} & a_{1} & a_{0} \\
a_{3} & a_{2} & a_{1} \\
a_{4} & a_{3} & a_{2} \\
a_{5} & a_{4} & a_{3} \\
\end{bmatrix}}_{A}
\underbrace{\begin{bmatrix}
\beta \\
\gamma \\
0
\end{bmatrix}}_{\xi_1} = 
\underbrace{\begin{bmatrix}
a_{1} & a_{0} & a_{-1} \\
a_{2} & a_{1} & a_{0} \\
a_{3} & a_{2} & a_{1} \\
a_{4} & a_{3} & a_{2} \\
a_{5} & a_{4} & a_{3} \\
a_{0} & a_{-1} & a_{-2} 
\end{bmatrix}}_{\Psi A}
\underbrace{\begin{bmatrix}
0 \\
\beta \\
\gamma 
\end{bmatrix}}_{\xi_2}. 
\end{align*} Hence for any permutations $\Pi_1$ and $\Pi_2$ such that $\Pi_1^{-1} \Pi_2 = \Psi$, it is impossible to tell from the "measurement vector" $\Pi_1 A \xi_1$ and knowledge of the subspace $\V$ if the point $v \in \V$ we are looking for is $A\xi_1$ or $A \xi_2$. 
\end{example}

Example \ref{ex:6x3} highlights the need to establish theory for unlabeled sensing that explicitly takes into consideration the combinatorics imposed by the Toeplitz structure, and we set this as the principal objective of this paper. In particular, we study the following question:

\begin{question} \label{question:US}
Suppose that $n \ge 2d$. Let $\KK$ be an infinite field and $\V$ a $d$-dimensional linear subspace of $\KK^n$ spanned by the columns of a Toeplitz matrix $V \in \KK^{n \times d}$. Let $\pi: \KK^n \rightarrow \KK^n$ be a permutation of the coordinates of $\KK^n$. Suppose $v_1, \, v_2 \in \V$ such that $v_1 = \pi(v_2)$. Under what conditions on $V$ and $\pi$ can we always conclude $v_1 = v_2$? 
\end{question}

It will be convenient to make the following definition.

\begin{definition} \label{definition:US}
If the conclusion $v_1 = v_2$ holds in Question \ref{question:US}, then we say that the Unlabeled Sensing Property holds true for $\V$ and $\pi$, or for short, that $\operatorname{USP}(\V,\pi)$ holds.
\end{definition}

We are able to provide a partial answer to Question \ref{question:US}, and to state it we need some notation. We let $\C(V)$ be the column-space of $V$, and set $r_0:= \rank(I-\Pi)$, where $\Pi$ is the $n \times n$ matrix that represents the permutation $\pi$. We denote by $J$ the $n \times n$ lower Jordan block associated to eigenvalue zero, and by convention $J^0 = I$. If $A$ is a matrix, $A_{\alpha:\alpha', \beta:\beta'}$ is the submatrix of $A$ corresponding to rows $\alpha, \, \alpha+1, \cdots, \alpha'$ and columns $\beta, \, \beta+1, \cdots, \beta'$. We have:

\begin{theorem} \label{thm:US}
Suppose $r_0\leq d$, then $\operatorname{USP}\big(\C(V),\pi\big)$ holds for $V$ generic Toeplitz. Suppose $r_0> d$, if there exists an integer $0\leq t \leq \frac{d}{2}$ such that
    \[ r_{t} :=  \rank \big( (\Pi - J^t)_{t+1:n,1:n} \big) \leq d - 2t,  \]
    or if there exists an integer $-\frac{d}{2} \le t < 0$ such that
    \[ r_{t} :=  \rank \big( (\Pi - (J^\top)^{-t})_{1:n+t,1:n} \big) \leq d + 2t,  \]then $\operatorname{USP}\big(\C(V),\pi\big)$ holds for $V$ generic Toeplitz if and only if $r_t=d-2|t|$.
\end{theorem}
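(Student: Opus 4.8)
The plan is to derive Theorem~\ref{thm:US} from Theorem~\ref{thm:MainTechnical} together with the reduction in \eqref{eq:USP-equivalence-matrix}, so essentially everything is bookkeeping on ranks. Recall from \eqref{eq:USP-equivalence-matrix} that $\operatorname{USP}(\C(V),\pi)$ holds for generic Toeplitz $V$ if and only if $\rank[V,\,PV] = 2d - \max\{d-r_0,0\}$, and that by Lemma~\ref{dim_aug_Toep_matrix} this is the generic value we should compare against. First I would dispose of the case $r_0 \le d$: then $\max\{d-r_0,0\}=d-r_0$ and the target rank is $d+r_0$. Applying Theorem~\ref{thm:MainTechnical} with $t=0$ — which is legitimate precisely because $r_0 \le d = d-2\cdot 0$ — gives $\rank[V,\,PV] = d + r_0 + 0 = d+r_0$ for generic Toeplitz $V$, which is exactly the required value, so $\operatorname{USP}$ holds unconditionally. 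This is the easy half.

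For the case $r_0 > d$ the target rank in \eqref{eq:USP-equivalence-matrix} is $2d$ (since $\max\{d-r_0,0\}=0$), so $\operatorname{USP}(\C(V),\pi)$ holds for generic $V$ iff $\rank[V,\,PV]=2d$, i.e. iff $[V,\,PV]$ has full column rank generically. Now suppose the hypothesis of the theorem holds, say for some $0 \le t \le d/2$ with $r_t \le d-2t$ (the case $-d/2 \le t<0$ with $r_t\le d+2t$ being symmetric). Theorem~\ref{thm:MainTechnical} then yields $\rank[V,\,PV] = d + r_t + 2|t|$ for generic Toeplitz $V$. Since always $\rank[V,\,PV]\le 2d$, and since $d+r_t+2|t|\le d+(d-2|t|)+2|t| = 2d$ with equality exactly when $r_t = d-2|t|$, we conclude that $[V,\,PV]$ attains full column rank $2d$ if and only if $r_t = d-2|t|$. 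Combined with the equivalence above, this is precisely the claimed biconditional $\operatorname{USP}(\C(V),\pi) \iff r_t = d-2|t|$.

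One subtlety I would want to address carefully is \emph{well-definedness}: the statement presupposes that the value $r_t$ (equivalently, whether $\operatorname{USP}$ holds) does not depend on which admissible $t$ one picks, in case several integers $t$ satisfy the respective inequality. This follows from Theorem~\ref{thm:MainTechnical} itself, since the left-hand side $\rank[V,\,PV]$ is intrinsic to $P$ and $V$: for any two admissible indices $t,t'$ one gets $d+r_t+2|t| = \rank[V,\,PV] = d+r_{t'}+2|t'|$ for generic $V$, so the quantity $r_t+2|t|$ is an invariant of $P$, and in particular the condition $r_t = d-2|t|$ is independent of the chosen $t$. I would state this observation explicitly before concluding. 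The main (and essentially only) obstacle is therefore not in this deduction at all but upstream, in Theorem~\ref{thm:MainTechnical}; here the work is just to carry the inequality $r_t \le d-2|t|$ through the formula and match it against the generic intersection dimension from Lemma~\ref{dim_aug_Toep_matrix}.
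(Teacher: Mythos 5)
Your proposal is correct and follows essentially the same route as the paper: both reduce to the rank criterion \eqref{eq:USP-equivalence-matrix}, handle $r_0\le d$ by applying Theorem \ref{thm:MainTechnical} with $t=0$ to get $\rank[V,\,PV]=d+r_0$, and handle $r_0>d$ by comparing $d+r_t+2|t|$ against the target $2d$. Your added remark that $r_t+2|t|$ is independent of the admissible $t$ chosen is a sensible observation the paper leaves implicit, but it does not change the argument.
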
 

In Theorem \ref{thm:US}, but also in subsequent statements, the attribute "generic Toeplitz" is to be interpreted in the sense that inside the affine space $\mathbb{T}_{n \times d}$ of $n \times d$ Toeplitz matrices with entries in $\KK$, there is a dense open subset $\mathscr{U}$ in the Zariski topology (\cite{harris2013algebraic}) of $\mathbb{T}_{n \times d}$, such that for every 
Toeplitz matrix $V \in \mathscr{U}$, the statement of the theorem holds true. That this is so, becomes evident from the proof of the theorem, which will be delivered in \S \ref{subsection:Proof-thm:US}.

While a complete answer to Question \ref{question:US} remains elusive, we pose the following conjecture, which we have verified by exhaustive computation for $n=2d$ and $d \le 5$: 

\begin{conjecture} \label{conj}
Let $\Pi$ be a permutation for which no $t$ exists as in Theorem \ref{thm:US}. Then $\operatorname{USP}\big(\C(V),\pi\big)$ holds for $V$ generic Toeplitz.
\end{conjecture}

On the other hand, for a family of cyclic permutations we have obtained a complete answer:

\begin{proposition}\label{prp:cyclic-full-rk}
Suppose $\pi$ is the cyclic permutation $\pi(i)=i+1$ for $i<n$ and $\pi(n)=1$. Then $\operatorname{USP}\big(\C(V),\pi\big)$ holds for $V$ generic Toeplitz, if and only if $t \ge d/2$.
%$$ \rank \left[ V, \, \Pi^{t} V \right] = \min \{ 2d , d+2|t| \}. $$
\end{proposition}

We next establish a theory that concerns the computation of $v \in \V$ via algebraic means. In \cite{song2018permuted} it was observed that, with $v = V \xi^*$ for $\xi^* \in \KK^d$, $y = \Pi v$, and $x=x_1,\dots,x_d$ variables organized in a column vector, $\xi^*$ is a root of the polynomial $p_\ell(Vx)-p_\ell(y),$ where $\ell$ is any positive integer and  
$$p_\ell(z) :=z_1^\ell+\cdots+z_n^\ell$$ is the $\ell$-th power-sum polynomial in variables $z:=z_1,\dots,z_n$. The following theorem extends the theory of \cite{tsakiris2020algebraic} to Toeplitz matrices:

\begin{theorem} \label{thm:d-polynomials}
With $n \ge d$, $V$ generic $n \times d$ Toeplitz, $\tilde{y} \in \KK^n$, and $\overline{\KK}$ the algebraic closure of $\KK$, the polynomial system of $d$ equations in $d$ unknowns given by 
\begin{align}
p_\ell(Vx) = p_\ell(\tilde{y}), \, \, \, \ell=1,\dots,d \label{eq:d-polynomials}
\end{align} has at most $d!$ roots in $\overline{\KK}^d$. 
\end{theorem}

\noindent With $y = \tilde{y}$, Theorem \ref{thm:d-polynomials} guarantees that the search for $\xi^*$ can be narrowed down to a finite set of vectors, not exceeding $d!$ in number. Moreover, if $\tilde{y}$ is a perturbed version of $y$, as is common in applications, then the square polynomial system \eqref{eq:d-polynomials} is still guaranteed to be consistent with at most $d!$ roots. Theorem \ref{thm:d-polynomials} thus serves as the theoretical justification for the adoption of the algorithm of \cite{tsakiris2020algebraic} for computing $v$ in the Toeplitz case; this we have already demonstrated with a signal processing simulation in \cite{ICASSP26}. 

Finally, consider the overdetermined polynomial system 
\begin{align}
p_\ell(Vx) = p_\ell(y), \, \, \, \ell=1,\dots,d+1 \label{eq:(d+1)-polynomials}
\end{align} which was already studied experimentally and theoretically in \cite{song2018permuted,liang2024ISSAC,Liang2025field} for general non-Toeplitz matrices $V$. We show that the main theoretical finding of \cite{Liang2025field} extends to the Toeplitz case: 

\begin{theorem} \label{thm:(d+1)-polynomials}
With $n \ge d+1$, $V$ generic $n \times d$ Toeplitz matrix and $\xi^*$ generic $d \times 1$ vector, the polynomial system \eqref{eq:(d+1)-polynomials} admits $\xi^*$ as its unique solution.
\end{theorem}

Most of the effort in the rest of the manuscript concentrates on the proof of an auxiliary result, Theorem \ref{thm:MainTechnical} (\S \ref{section:Proof-Auxiliary}). With Theorem \ref{thm:MainTechnical} at hand, Theorem \ref{thm:US} and Proposition \ref{prp:cyclic-full-rk} follow readily. The proof of Theorem \ref{thm:d-polynomials} involves an argument which  can be seen as an alternative proof of Theorem 2 in \cite{tsakiris2020algebraic}, while the rest of its proof is identical to the proof of Theorem 3 in \cite{tsakiris2020algebraic}. Finally, the proof of Theorem \ref{thm:(d+1)-polynomials} consists of a careful check in the lengthy proof of Theorem 1.1 in \cite{Liang2025field}, that a generic Toeplitz matrix is sufficiently generic.

%%%%%%%%
\section{An Auxiliary Theorem} \label{section:Proof-Auxiliary}

This section is devoted to proving the following key statement:

\begin{theorem}\label{thm:MainTechnical}
If there exists an integer $0\leq t \leq \frac{d}{2}$ such that
\[ r_{t} :=  \rank\big( (\Pi - J^t)_{t+1:n,1:n} \big) \leq d - 2t, \] or if there exists an integer $-\frac{d}{2} \le t < 0$ such that \[ r_{t} :=  \rank\big( (\Pi - (J^\top)^{-t})_{1:n+t,1:n} \big) \leq d + 2t,  \] then $ \rank \left[V,\Pi V\right] = d + r_{t} + 2|t|$, for $V$ generic Toeplitz.
\end{theorem}

We prove the statement corresponding to $0 \le t \le \frac{d}{2}$; the case $-\frac{d}{2} \le t <0$ is treated in a similar fashion. 

Throughout $J_n$ and $J_d$ denote the lower triangular $n \times n$ and $d \times d$, respectively, Jordan blocks associated to eigenvalue zero. Also,  $U=(x_{i-j})$ will be an $n \times d$ Toeplitz matrix, with the $x_{i-j}$'s algebraically independent over $\KK$; we will denote by $\mathbb{L}$ the field generated by the entries of $U$ over $\KK$, and $\rank_{\mathbb{L}}(\cdot)$ will indicate the matrix rank over the field $\mathbb{L}$. 

In what follows, we will prove that $\rank_{\mathbb{L}} [U, \, \Pi U] = d+t+r_t$. This will ensure the existence of a $(d+t+r_t)$-minor of $[U, \,  \Pi U]$ which is a non-zero element of $\mathbb{L}$. Write this element as $f/g$, where $f$ and $g$ are polynomials in the entries of $U$ with coefficients in $\KK$. We may view $f$ and $g$ as polynomial functions on the affine space $\mathbb{T}_{n \times d}$ of $n \times d$ Toeplitz matrices with entries in $\KK$. As $\KK$ is infinite and the product $fg$ is non-zero, the polynomial $fg$ defines a hypersurface $\mathscr{Y} \subsetneq \mathbb{T}_{n \times d}$. By construction, for any Toeplitz matrix $V \in \mathbb{T}_{n \times d} \setminus \mathscr{Y}$, the rank of the matrix $[V, \, \Pi V]$ is $d+t+r_t$, which is the conclusion of the theorem.

The first step in the strategy of the proof is to exploit the fact that the matrix $[U, \, \Pi U - UJ_d^t]$ is obtained from the matrix $[U, \, \Pi U]$ by elementary column operations, whence  
\begin{align}
\rank_{\mathbb{L}} [U, \,  \Pi U] = \rank_{\mathbb{L}} [U, \, \Pi U - UJ_d^t]. \label{eq:ElemColOper}
\end{align} \noindent We will prove that $\rank_{\mathbb{L}} [U, \, \Pi U - UJ_d^t] = d+t+r_t$.

The second step in the strategy of the proof is write $[U, \, \Pi U - UJ_d^t]$ as the sum of two suitable matrices $A + B$ and show that $A,B$ satisfy the hypotheses of the following classical result, together with $\rank(A)+\rank(B) = d+t+r_t$: 

\begin{lemma}[\cite{MarsagliaStyan1972}]\label{rk_concat}
Let $A$ and $B$ be matrices of the same size with entries in some field. Then
$\rank (A + B) = \rank (A) + \rank (B)$ if and only if
\begin{align*}
     \rank \left(\left[A, \, B\right]\right) = \rank (A) + \rank (B) = \rank \left(\begin{bmatrix}
        A \\
        B
    \end{bmatrix}\right). 
\end{align*} 
\end{lemma}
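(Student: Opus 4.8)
The plan is to split the biconditional into its two implications, observing that the forward direction is essentially free while the reverse carries all the content. Write $r=\rank(A)$ and $s=\rank(B)$, and let $\C(\cdot)$ and $\mathcal{R}(\cdot)$ denote column space and row space. I would begin from the elementary chain
\begin{align*}
\rank(A+B) \le \rank[A,\,B] \le \rank(A)+\rank(B),
\end{align*}
whose first inequality is the inclusion $\C(A+B)\subseteq\C(A)+\C(B)=\C([A,\,B])$ and whose second is subadditivity of rank under concatenation; transposing yields the same chain with $\begin{bmatrix}A\\ B\end{bmatrix}$ in place of $[A,\,B]$. The forward direction ($\Rightarrow$) then follows at once: if $\rank(A+B)=r+s$, equality is forced all along both chains, giving $\rank[A,\,B]=\rank\begin{bmatrix}A\\ B\end{bmatrix}=r+s$.

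For the reverse direction ($\Leftarrow$), the first step is to recast the hypotheses geometrically. By Grassmann's formula $\rank[A,\,B]=\dim(\C(A)+\C(B))=r+s-\dim(\C(A)\cap\C(B))$, so the hypothesis $\rank[A,\,B]=r+s$ is equivalent to $\C(A)\cap\C(B)=\{0\}$; dually, $\rank\begin{bmatrix}A\\ B\end{bmatrix}=r+s$ is equivalent to $\mathcal{R}(A)\cap\mathcal{R}(B)=\{0\}$. Thus I may assume that the column spaces meet trivially and the row spaces meet trivially.

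The second step is a rank-factorization argument. Choosing full-rank factorizations $A=C_AR_A$ and $B=C_BR_B$, with $C_A,C_B$ of full column rank and $R_A,R_B$ of full row rank, I would write
\begin{align*}
A+B = [\,C_A,\,C_B\,]\begin{bmatrix}R_A\\ R_B\end{bmatrix}.
\end{align*}
The trivial column-space intersection says exactly that $[\,C_A,\,C_B\,]$ has full column rank $r+s$ (its column space being $\C(A)\oplus\C(B)$), and the trivial row-space intersection says $\begin{bmatrix}R_A\\ R_B\end{bmatrix}$ has full row rank $r+s$. Since multiplication on the left by a matrix of full column rank is injective and hence preserves rank, I conclude $\rank(A+B)=\rank\begin{bmatrix}R_A\\ R_B\end{bmatrix}=r+s$.

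The single point requiring care — and the main obstacle in an otherwise short argument — is the rank-preservation fact that $\rank(PQ)=\rank(Q)$ whenever $P$ has full column rank; I would justify it via $\Null(PQ)=\Null(Q)$ and the rank-nullity theorem. Everything else reduces to the dimension count for sums of subspaces and the existence of full-rank factorizations, both of which hold over an arbitrary field.
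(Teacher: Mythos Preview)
Your proof is correct. Note, however, that the paper does not actually prove this lemma: it is quoted as a classical result with a citation to Marsaglia--Styan (1972) and used as a black box. So there is no ``paper's own proof'' to compare against; your argument simply supplies what the paper omits. The rank-factorization approach you take for the reverse implication is clean and standard, and the forward implication via the sandwich $\rank(A+B)\le\rank[A,\,B]\le\rank(A)+\rank(B)$ (and its transpose) is the natural one-liner.
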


\noindent The matrices $A$ and $B$, as well as certain other auxiliary matrices are defined as follows. 
\begin{align*}
X & = \Pi U - J_{n}^t U \\
Y &= J_{n}^t U - U J_{d}^t \\
X_1 &=X_{1:t,1:d-t} \\
X_2 & =X_{1:t,d-t+1:d} \\
X_3 &=X_{t+1:n,1:d-t} \\
X_4 &=X_{t+1:n,d-t+1:d} \\
A &= \begin{bmatrix}
    U & \begin{matrix}
        X_1 + Y_1 & X_2 \\
        0_{t \times (d-t)} & Y_4
    \end{matrix}
\end{bmatrix} \\
B &= \begin{bmatrix}
    0_{n \times d} & \begin{matrix}
        0_{t \times d} \\
        X_{t+1:n,1:d}
    \end{matrix}
\end{bmatrix},
\end{align*}
where $Y_1,Y_2,Y_3,Y_4$ are defined analogously. Observe that 
\begin{align}
Y_1 &=-U_{1:t,t+1:d} \\
Y_2 & =0, \\
Y_3 & = 0, \\
Y_4 & =U_{1:n-t,d-t+1:d}, \label{eq:Y4} \\
A+B & = [U, \, \Pi U - UJ_d^t].
\end{align}

The rest of the proof consists of showing that the hypotheses of Lemma \ref{rk_concat} hold for $A$ and $B$ and that $\rank(A)+\rank(B) = d+t+r_t$.

We begin with the following key lemma:

\begin{lemma}\label{minors_li}
Let $T = (y_{i-j}:i,j\in \mathbb{Z})$ be a Toeplitz matrix of infinite size, with the $y_{i-j}$'s algebraically independent over $\KK$. Then for any submatrix $S$ of size $\ell \times d$ with $\ell \geq d$, the $d$-minors of $S$ are linearly independent over $\KK$.
\end{lemma}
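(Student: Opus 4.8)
The plan is to set up the problem as a question about linear independence of certain polynomials in the algebraically independent variables $y_k$, and to exhibit, for each $d$-subset of rows of $S$, a monomial that appears in the corresponding minor but in no other. Concretely, fix the column indices $j_1 < j_2 < \cdots < j_d$ that determine $S$ as a submatrix of $T$; then a generic $\ell \times d$ submatrix $S$ is obtained by choosing row indices $i_1 < \cdots < i_\ell$, and the $d$-minors of $S$ are indexed by the $\binom{\ell}{d}$ choices of a $d$-subset $\{i_{k_1} < \cdots < i_{k_d}\}$ of these rows. The $(p,q)$ entry of such a square submatrix is $y_{i_{k_p} - j_q}$, so its determinant is $\sum_{\sigma \in S_d} \operatorname{sgn}(\sigma)\prod_{p=1}^d y_{i_{k_p} - j_{\sigma(p)}}$. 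I want to show these $\binom{\ell}{d}$ polynomials are $\KK$-linearly independent.

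First I would observe that it suffices to treat the case $\ell = d+1$ is not enough; rather, I should argue directly for general $\ell$. The key step is a separating-monomial argument: to each $d$-subset of rows $R = \{i_{k_1} < \cdots < i_{k_d}\}$ I associate the monomial $m_R := \prod_{p=1}^d y_{i_{k_p} - j_p}$ coming from the identity permutation (the "main diagonal" term). I claim that $m_R$ occurs with nonzero coefficient in $\det S_R$ and occurs in no other $\det S_{R'}$ with $R' \neq R$. The first part is clear because the $y_{i_{k_p}-j_p}$ need not all be distinct (Toeplitz!), so I must be a little careful: if two of the diagonal indices coincide, the main-diagonal monomial is not squarefree, and cancellation with other permutations is conceivable. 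To handle this cleanly I would instead pick, among all monomials appearing in $\det S_R$, the lexicographically largest one with respect to a suitable ordering of the variables (say $y_k \succ y_{k'}$ iff $k > k'$), and show this "leading monomial" determines $R$. This is the standard trick for proving that generic Toeplitz/Hankel minors are independent, and the monotonicity $i_{k_1} < \cdots < i_{k_d}$ together with $j_1 < \cdots < j_d$ is exactly what forces the identity permutation to give the unique lexicographically maximal term in the determinant (by a rearrangement/exchange argument: any inversion in $\sigma$ can be undone to strictly increase the monomial in this order).

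Once the leading monomial $m_R$ is shown to uniquely identify $R$ and to appear with coefficient $\pm 1$ in $\det S_R$, linear independence follows immediately: in any hypothetical $\KK$-linear dependence $\sum_R c_R \det S_R = 0$, pick an $R$ with $c_R \neq 0$ whose leading monomial $m_R$ is maximal among $\{m_R : c_R \neq 0\}$; then $m_R$ cannot be cancelled by any other term, since it is strictly larger than the leading monomial of every other $\det S_{R'}$ with $c_{R'} \neq 0$ and does not appear at all in $\det S_{R'}$ for $R' \neq R$ — contradiction. I expect the main obstacle to be making the "leading monomial determines $R$" step fully rigorous in the presence of repeated variables: one must verify that the exchange argument strictly increases the monomial at each step (no ties), which relies on the strict inequalities among both the row and column indices. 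I would isolate this as a short combinatorial sublemma about the term $\prod_p y_{a_p - b_{\sigma(p)}}$ with $a_1 < \cdots < a_d$ and $b_1 < \cdots < b_d$, showing the identity $\sigma$ uniquely maximizes it in the chosen order.
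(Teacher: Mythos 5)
Your overall strategy coincides with the paper's: order the variables by their subscripts, identify the initial monomial of each $d$-minor with respect to the induced lexicographic order, and show these initial monomials are pairwise distinct, which forces linear independence. However, the specific claim you commit to --- that the identity permutation (the main diagonal) gives the unique lexicographically maximal term, and that the main-diagonal monomial $m_R=\prod_{p} y_{i_{k_p}-j_p}$ identifies $R$ --- is false, so the combinatorial sublemma you propose to isolate cannot be proved. Under the order $y_k\succ y_{k'}$ for $k>k'$, the largest variable in the square submatrix is the bottom-left entry $y_{i_{k_d}-j_1}$, since the subscript $i_{k_p}-j_q$ increases in $p$ and decreases in $q$; the greedy/exchange argument then selects the \emph{anti-diagonal}: replacing a non-inverted pair $(p,\sigma(p)),(q,\sigma(q))$ with $p<q$ and $\sigma(p)<\sigma(q)$ by the crossed pair produces the strictly larger variable $y_{i_{k_q}-j_{\sigma(p)}}$, so it is the reversal $\sigma(p)=d+1-p$, not the identity, that uniquely maximizes the term. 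This is exactly what the paper uses: $\operatorname{in}_{\succ}\det(T_{\I\times\J})=\prod_{a=1}^{d}y_{i_a-j_{d+1-a}}$.

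The distinction is not cosmetic, because the main-diagonal monomial genuinely fails to separate the minors. Take $d=2$ and columns $\J=\{1,3\}$: the row sets $\{1,4\}$ and $\{2,3\}$ give the minors $y_0y_1-y_{-2}y_3$ and $y_0y_1-y_{-1}y_2$, so the main-diagonal monomial $y_0y_1$ is the same for both and appears in each with coefficient $+1$; it cannot serve as a separating monomial. The anti-diagonal works precisely because its subscripts $i_a-j_{d+1-a}$ are strictly increasing in $a$ (both $i_a$ and $-j_{d+1-a}$ increase), hence the leading monomial is squarefree, appears with coefficient $\pm 1$, and the row set $\I$ can be read off from it; neither property holds on the main diagonal, as you yourself began to suspect when you noted the repeated-variable issue. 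With the reversal permutation substituted for the identity, your argument becomes the paper's proof.
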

\begin{proof}
Let $S$ be the submatrix of $T$ associated to row indices $\K=\{k_1<\cdots <k_l\} \subset \mathbb{Z}$ and column indices $\J=\{j_1<\cdots <j_d\}\subset  \mathbb{Z}$; we write $S=T_{\K\times \J}$. Let $\I=\{i_1<\cdots <i_d\} \subseteq \K$ and consider the $d$-minor of $S$ given by $\det(T_{\I \times \J})$. Under the lexicographic term order $\succ$ induced by $y_\gamma>y_\delta$ for every $\gamma>\delta$, direct inspection of the matrix 
\begin{align*}
T_{\I,\J} = \begin{bmatrix}
y_{i_1-j_1} & y_{i_1-j_2} & \cdots & y_{i_1-j_d} \\
\vdots & \vdots & \vdots & \vdots \\
y_{i_{d-1}-j_1} & y_{i_{d-1}-j_2} & \cdots & y_{i_{d-1}-j_d} \\
y_{i_{d}-j_1} & y_{i_{d}-j_2} & \cdots & y_{i_{d}-j_d}
\end{bmatrix}
\end{align*} shows that the initial monomial of $\det(T_{\I \times \J})$ is the product of the monomials along the anti-diagonal of $T_{\I,\J}$, i.e.
\begin{align*}
\operatorname{in}_{\succ} \det(T_{\I \times \J})=\prod_{a=1}^{d} y_{i_{a}-j_{d+1-a}}.
\end{align*} Notice that distinct $\I \subseteq \K$ lead to distinct $\operatorname{in}_{\succ} \det(T_{\I \times \J})$. Hence, as $\I$ varies in $\K$, the set of $\operatorname{in}_{\succ} \det(T_{\I \times \J})$'s is a set of distinct monomials of degree $d$, which are thus linearly independent over $\KK$. This certainly implies that the set of $\det(T_{\I \times \J})$'s is linearly independent over $\KK$.
\end{proof}

The next two lemmas will be useful when working with $U$.

\begin{lemma} \label{dim_aug_Toep_matrix}
Let $W$ be an $n \times d_0$ rank-$d_0$ matrix with entries in $\KK$. Then 
$$ \rank_{\mathbb{L}} [W, \, U ] = \min\{d_0+d,n\}.$$
\end{lemma}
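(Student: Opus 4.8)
The plan is to reduce, via a generalized Laplace expansion, to the linear‑independence statement already established in Lemma \ref{minors_li}. Set $m := \min\{d_0 + d,\, n\}$. Since $W$ has rank $d_0$ we have $d_0 \le n$, hence $d_0 \le m \le d_0 + d$, and the upper bound $\rank_{\mathbb{L}}[W,\,U] \le m$ is automatic because $[W,\,U]$ is an $n \times (d_0+d)$ matrix. It therefore suffices to exhibit an $m \times m$ submatrix of $[W,\,U]$ whose determinant is a nonzero polynomial in the entries of $U$, equivalently a nonzero element of $\mathbb{L}$.

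First I would fix the column selection: keep all $d_0$ columns of $W$ and choose any $m - d_0$ of the $d$ columns of $U$, which is legitimate since $0 \le m - d_0 = \min\{d,\,n-d_0\} \le d$; call $\mathcal{C}$ the chosen columns of $U$, and let $N = [W,\, U_{:,\mathcal{C}}]$, an $n \times m$ matrix. Next I would fix the rows: since $\rk W = d_0$, pick $R' \subseteq \{1,\dots,n\}$ with $|R'| = d_0$ and $\det(W_{R',:}) \neq 0$; since $n - d_0 \ge m - d_0$, pick $S \subseteq \{1,\dots,n\} \setminus R'$ with $|S| = m - d_0$, and set $R = R' \cup S$, so that $|R| = m$.

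The crux is to apply the Laplace expansion of $\det(N_{R,:})$ along the $m - d_0$ columns coming from $U$: this writes $\det(N_{R,:})$ as a signed sum, over the $(m-d_0)$‑subsets $S'$ of $R$, of terms $\pm\,\det(W_{R \setminus S',:})\cdot\det(U_{S', \mathcal{C}})$, each row subset $S'$ contributing exactly one such product. Now $U_{R,\mathcal{C}}$ is a submatrix of a bi‑infinite generic Toeplitz matrix with $|R| = m \ge m - d_0$ rows, so by Lemma \ref{minors_li} its maximal minors $\{\det(U_{S',\mathcal{C}})\}_{S'}$ are linearly independent over $\KK$. Hence $\det(N_{R,:})$ vanishes identically if and only if every coefficient $\det(W_{R\setminus S',:})$ vanishes; but the coefficient attached to $S' = S$ is $\pm\det(W_{R',:}) \neq 0$. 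Therefore $\det(N_{R,:})$ is a nonzero element of $\mathbb{L}$, which gives $\rank_{\mathbb{L}}[W,\,U] \ge m$, and together with the trivial upper bound this proves the lemma.

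I expect the only delicate points to be bookkeeping: verifying that the column count $m-d_0$ is admissible in all regimes of $(d_0,d,n)$ (in particular the degenerate cases $m = n$ and $m - d_0 = 0$), and making sure that the generalized Laplace expansion genuinely attaches a single product $\pm\det(W_{R\setminus S',:})\det(U_{S',\mathcal{C}})$ to each $S'$, so that the linear independence of the Toeplitz minors decouples the $W$‑part from the $U$‑part. Beyond invoking Lemma \ref{minors_li}, there is no analytic or combinatorial obstacle.
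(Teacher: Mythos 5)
Your proof is correct and follows essentially the same route as the paper's: a generalized Laplace expansion of a well-chosen maximal minor of $[W,\,U]$, with the linear independence of the Toeplitz minors from Lemma \ref{minors_li} forcing the expansion to be nonzero because one coefficient is a nonzero minor of $W$. The only (immaterial) difference is that you keep all columns of $W$ and select columns of $U$, whereas the paper keeps all $d$ columns of $U$ and selects $d_e-d$ columns of $W$.
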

\begin{proof}
Set $d_e = \min\{d_0+d,n\}$ and $d_0'=d_e-d>0$. By generalized Laplace expansion, each $d_e$-minor of the $n \times (d_0+d)$ matrix $[W, \, U]$ is a linear combination of $d$-minors of $U$ with coefficients that are up to sign $d_0'$-minors of $W$. As $d_0' \le d_0 = \rank(W)$, there is at least one non-zero $d_0'$-minor of $W$, and there is a $d_e$-minor of $[W, \, U]$ for which the said $d_0'$-minor of $W$ appears as a coefficient in the corresponding linear combination. This linear combination is a non-zero element of $\mathbb{L}$, because by Lemma \ref{minors_li} the $d$-minors of $U$ are linearly independent over $\KK$. 
\end{proof}

\begin{lemma}\label{lem:QU}
With $m \le n$, let $Q$ be any $m \times n$ matrix with entries in $\KK$ and of rank $r \le d$. Then there is an $r \times d$ submatrix of $QU$ for which every $r$-minor is non-zero.
\end{lemma}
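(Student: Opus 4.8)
The plan is to combine a suitable factorization of $Q$ with the linear independence of Toeplitz minors (Lemma \ref{minors_li}). First I would reduce $Q$ to a cleaner form: since $Q$ has rank $r$, write $Q = LR$ where $L$ is an $m \times r$ matrix and $R$ is an $r \times n$ matrix of rank $r$, both with entries in $\KK$. Then $QU = L(RU)$, and because $L$ has full column rank $r$, some $r$ rows of $L$ form an invertible $r \times r$ matrix; picking those rows of $QU$ shows it suffices to prove the claim for $RU$, i.e.\ I may assume $m = r$ and that $Q$ itself has full row rank $r \le d$. So the task becomes: for an $r \times n$ matrix $Q$ of rank $r$, exhibit $r$ columns of $QU$ whose $r \times r$ minor is a nonzero element of $\mathbb{L}$.

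Next, I would analyze the $r$-minors of $QU$ via generalized Laplace (Cauchy--Binet) expansion, exactly as in the proof of Lemma \ref{dim_aug_Toep_matrix}. Fix a choice of $r$ columns $\beta = \{\beta_1 < \cdots < \beta_r\}$ of $U$ (equivalently $r$ columns of $QU$, since $QU$ has the same number of columns as $U$); the corresponding $r$-minor of $QU$ is $\det\big(Q\, U_{1:n,\beta}\big)$, which by Cauchy--Binet equals $\sum_{\K} \det(Q_{1:r,\K}) \det(U_{\K,\beta})$, the sum ranging over $r$-element subsets $\K$ of the columns of $Q$ (equivalently rows of $U$). Since $Q$ has rank $r$, at least one coefficient $\det(Q_{1:r,\K})$ is nonzero; and by Lemma \ref{minors_li} the $r$-minors $\det(U_{\K,\beta})$ of the Toeplitz matrix $U$, as $\K$ varies, are linearly independent over $\KK$. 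Hence the whole sum is a nonzero element of $\mathbb{L}$, so this particular $r \times r$ submatrix of $QU$ has nonzero determinant.

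Finally, to get the stronger statement that there is an $r \times d$ submatrix of $QU$ \emph{every} $r$-minor of which is nonzero, I would argue that we can fix the $r$ rows of $QU$ — namely all $r$ of them, in the reduced setting — and then observe that \emph{for every} choice of $r$ columns $\beta$ the same argument applies verbatim: the coefficient $\det(Q_{1:r,\K})$ does not depend on $\beta$, so the nonzero coefficient persists, while the minors $\det(U_{\K,\beta})$ remain $\KK$-linearly independent in $\K$ for each fixed $\beta$ by Lemma \ref{minors_li}. Thus the full $r \times d$ matrix $QU$ (after the row reduction, the $r \times d$ submatrix of the original $QU$ selecting those $r$ rows of $L$) has all its $r$-minors nonzero. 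The only mild subtlety — the step I expect to require the most care in writing — is bookkeeping the reduction from general $m$ and $Q$ down to $m = r$ with $Q$ of full row rank, making sure that selecting rows of $QU$ corresponds correctly to selecting rows of $L$ and that column indices are untouched throughout; everything after that is an immediate application of Cauchy--Binet together with Lemma \ref{minors_li}.
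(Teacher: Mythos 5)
Your proposal is correct and follows essentially the same route as the paper: reduce to the case $m=r$ with $Q$ of full row rank, then apply Cauchy--Binet to each $r$-minor $\det(QU_\J)=\sum_{\K}\det(Q_{\K})\det(U_{\K,\J})$ and invoke the linear independence of the Toeplitz minors (Lemma~\ref{minors_li}) together with one nonvanishing coefficient $\det(Q_{\K'})$ that works uniformly for all column choices $\J$. The only cosmetic difference is that the paper performs the reduction by directly selecting $r$ linearly independent rows of $Q$ rather than via a rank factorization $Q=LR$.
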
 
\begin{proof}
Since the rank of $Q$ is $r$, there is an $r \times n$ row-submatrix of $Q$ whose rank is $r$; we may replace $Q$ with that submatrix, i.e. we may assume $m=r$. With any $\J=\{j_1<\cdots<j_r\} \subseteq \{1,\dots,d\}$ and $U_\J$ the column-submatrix of $U$ indexed by $\J$, the Cauchy-Binet formula gives 
$$\det(QU_\J) = \sum_{\K} \det(Q_\K) \det(U_{\K,\J}),$$ where the sum extends over all ordered subsets of $\{1,\dots,n\}$ of cardinality $r$. As $\rank(Q)=r$, there is some $\K'$ such that $\det(Q_{\K'}) \neq 0$. Then Lemma \ref{minors_li} gives $\det(QU_\J) \neq 0$.
\end{proof}

We next determine the ranks of $A$ and $B$. For this, but also later, we will make use of the elementary inequality 
\begin{align}
\rank \begin{pmatrix}
        R & S \\
        0 & T
\end{pmatrix}\geq \rank (R)+ \rank (T). \label{rk_tri}
\end{align}

\begin{lemma} \label{lem:rank-A}
We have $\rank_{\mathbb{L}}(A) = d + 2t$.
\end{lemma}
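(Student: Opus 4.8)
The plan is to establish $\rank_{\mathbb{L}}(A)\le d+2t$ and $\rank_{\mathbb{L}}(A)\ge d+2t$ separately. Write $A=[\,U\mid M\,]$ with $M$ the right-hand $n\times d$ block, and split $M=[\,M^{(1)}\mid M^{(2)}\,]$ into its first $d-t$ and last $t$ columns, so that $M^{(1)}=\bigl[\begin{smallmatrix}X_1+Y_1\\ 0\end{smallmatrix}\bigr]$ and $M^{(2)}=\bigl[\begin{smallmatrix}X_2\\ Y_4\end{smallmatrix}\bigr]$. For the upper bound, observe that $M^{(1)}$ is supported on its top $t$ rows, hence $\rank M^{(1)}\le t$, while $M^{(2)}$ has only $t$ columns, hence $\rank M^{(2)}\le t$; combined with $\rank_{\mathbb{L}}U=d$ (immediate from Lemma \ref{minors_li}) this gives $\rank_{\mathbb{L}}A\le d+2t$.

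For the lower bound I would produce a nonzero $(d+2t)$-minor via the block inequality \eqref{rk_tri}. Let $A'$ be the column-submatrix of $A$ that keeps all $d$ columns of $U$, all $t$ columns of $M^{(2)}$, and the $t$ columns of $M^{(1)}$ coming from the first $t$ columns of $X_1+Y_1$ (possible since $d-t\ge t$). After reordering columns, and partitioning rows into $\{1,\dots,t\}$ and $\{t+1,\dots,n\}$, the matrix $A'$ is block upper-triangular with diagonal blocks $R:=(X_1+Y_1)_{1:t,1:t}$ and $T:=[\,U_{t+1:n,:}\mid Y_4\,]$, because $M^{(1)}$ vanishes on the bottom $n-t$ rows and, by \eqref{eq:Y4}, the restriction of $M^{(2)}$ to those rows is exactly $Y_4$. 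Since $A'$ is a submatrix of $A$ with precisely $d+2t$ columns, \eqref{rk_tri} reduces the claim to $\rank_{\mathbb{L}}T=d+t$ and $\rank_{\mathbb{L}}R=t$. For $T$: because $U=(x_{i-j})$ is Toeplitz, once $Y_4=U_{1:n-t,\,d-t+1:d}$ is placed on rows $t+1,\dots,n$, the $i$-th row of $T$ is simply $(x_{i-1},x_{i-2},\dots,x_{i-d-t})$, so $T$ is the submatrix of the infinite generic Toeplitz matrix on rows $t+1,\dots,n$ and columns $1,\dots,d+t$; as $n-t\ge d+t$ (from $n\ge 2d$ and $2t\le d$), Lemma \ref{minors_li} furnishes a nonzero $(d+t)$-minor. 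For $R$: since $J_n^t$ annihilates the top $t$ rows we have $X_{1:t,:}=(PU)_{1:t,:}$, and with $Y_1=-U_{1:t,t+1:d}$ the $(i,k)$ entry of $R$ is $x_{\pi^{-1}(i)-k}-x_{i-t-k}$ for $i,k\in\{1,\dots,t\}$; among all indices occurring anywhere in $R$ the smallest is $1-2t$, it occurs only in the $(1,t)$ entry and only with coefficient $-1$, so Laplace expansion there isolates a nonzero contribution $\pm x_{1-2t}\det R'$, where $R'$ is $R$ with row $1$ and column $t$ deleted (of the same shape, and free of $x_{1-2t}$). An induction on $t$ — base case the $1\times 1$ entry $x_{\pi^{-1}(t)-1}-x_{-1}\neq 0$ — then shows $\det R$ is a nonzero polynomial, so $\rank_{\mathbb{L}}R=t$. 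Together these force $\rank_{\mathbb{L}}A=d+2t$.

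I expect the verification $\rank_{\mathbb{L}}R=t$ to be the main obstacle: the entries of $R$ are differences of the generic variables, with index sets that genuinely overlap, so Lemma \ref{minors_li} does not apply off the shelf and one must run the leading-monomial bookkeeping that, stage by stage, pins a variable down to a unique matrix position. A lesser but real subtlety is the Toeplitz re-indexing that identifies the artificially stacked block $T$ with an honest submatrix of a generic Toeplitz matrix.
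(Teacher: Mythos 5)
Your proof is correct and follows essentially the same route as the paper's: the identical subadditivity bound $\rank(A)\le\rank(U)+\rank(M^{(1)})+\rank(M^{(2)})$ for the upper direction, and for the lower direction the same block-triangular reduction via \eqref{rk_tri} to $\rank(X_1+Y_1)=t$ plus the recognition of $[U_{t+1:n,1:d},\,Y_4]$ as an $(n-t)\times(d+t)$ generic Toeplitz matrix of full rank $d+t$ by Lemma \ref{minors_li}. The only divergence is the certificate that a $t\times t$ minor of $X_1+Y_1$ is nonzero: you peel the \emph{leftmost} block one entry at a time using the variables $x_{1-2t},x_{3-2t},\dots,x_{-1}$ (which works, though it is a stage-by-stage peeling rather than a literal induction on $t$, since the column offset $-t$ in the $Y_1$-part is unchanged in the smaller matrices), whereas the paper gets the same conclusion in one step from the \emph{rightmost} $t\times t$ block, where $x_{t-d}$ occupies the diagonal alone so that $(x_{t-d})^t$ survives in the determinant with a coefficient in $\KK\setminus\{0\}$.
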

\begin{proof}
The rank inequality  
\[ \rank_{\mathbb{L}}(A) \leq \rank_{\mathbb{L}}(U)+\rank_{\mathbb{L}}(\underbrace{X_1+Y_1}_{t \times (d-t)}) +\rank_{\mathbb{L}} \underbrace{\begin{bmatrix}X_2 \\ Y_4 \end{bmatrix}}_{n \times d},\] together with $\rank_{\mathbb{L}}(U) = d$ and $t \le \frac{d}{2} \le d-t$, lead to $\rank_{\mathbb{L}}(A) \le d+2t$.

For the reverse direction, \eqref{rk_tri} gives 
\[ \rank_{\mathbb{L}}(A) \geq \rank_{\mathbb{L}}(X_1+Y_1)+ \rank_{\mathbb{L}} 
    [U_{t+1:n,1:d}, \, Y_4].\]
Now, observe that the variable $x_{-d+t}$ does not appear in $X_1$, while it appears in $Y_1$ only in the diagonal $(k,d-2t+k)$ for $k = 1,2,\cdots,t$. It follows that in the expansion of the rightmost $t \times t$ minor of $X_1+Y_1$ the monomial $(x_{-d+t})^t$ appears with a non-zero coefficient in $\KK$, and thus this minor is non-zero; i.e. $\rank_{\mathbb{L}}(X_1+Y_1)=t$. Moreover, by \eqref{eq:Y4}, 
\begin{align*}
[U_{t+1:n,1:d}, \, Y_4] &= [U_{t+1:n,1:d}, \, U_{1:n-t,d-t+1:d}] \\
&= \begin{bmatrix}
\begin{matrix}
x_t & \cdots & x_{-d+t+1} \\
x_{t+1} & \cdots & x_{-d+t+2} \\
\vdots & \cdots & \vdots \\
x_{n-1} & \cdots & x_{n-d} 
\end{matrix} & 
\begin{matrix}
x_{-d+t} & \cdots & x_{1-d} \\
x_{-d+t+1} & \cdots & x_{1-d+1} \\
\vdots & \cdots & \vdots \\
x_{n-d-1}& \cdots & x_{n-d-t}
\end{matrix}
\end{bmatrix}
\end{align*} is an $(n-t) \times (d+t)$ Toeplitz matrix, with algebraically independent over $\KK$ elements along its diagonals. As such, and since $t \le \frac{d}{2}$ implies $d+t \le n-t$, the rank of this Toeplitz matrix is $d+t$ by Lemma \ref{minors_li}. It follows that $\rank_{\mathbb{L}}(A) \ge d+2t$.
\end{proof}
    
\begin{lemma}\label{lem:rank-B}
We have $\rank_{\mathbb{L}}(B) = r_t$.
\end{lemma}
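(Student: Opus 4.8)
The plan is to show that $\rank_{\mathbb{L}}(B) = r_t$ where $r_t = \rank\big((P-J^t)_{t+1:n,1:n}\big)$. The matrix $B$ is block-structured: its first $d$ columns are zero, its last $d$ columns have a zero block in the first $t$ rows and the block $X_{t+1:n,1:d}$ below. Hence $\rank_{\mathbb{L}}(B) = \rank_{\mathbb{L}}(X_{t+1:n,1:d})$, and the whole task reduces to computing the rank of this $(n-t)\times d$ submatrix of $X = PU - J_n^t U = (P - J_n^t)U$ over $\mathbb{L}$.

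First I would note that $X_{t+1:n,1:d}$ equals $\big((P-J_n^t)U\big)_{t+1:n,1:d} = (P-J_n^t)_{t+1:n,1:n}\, U$, since deleting the first $t$ rows of the product $(P-J_n^t)U$ is the same as deleting the first $t$ rows of $P-J_n^t$ before multiplying by $U$. Write $Q := (P-J_n^t)_{t+1:n,1:n}$; this is an $(n-t)\times n$ matrix with entries in $\KK$, and by definition $\rank(Q) = r_t$. Since the hypothesis of Theorem \ref{thm:MainTechnical} in this case is $r_t \le d-2t \le d$, we have $r_t \le d$, so $Q$ has rank $r_t \le d$ and the dimension hypothesis $m = n-t \le n$ of Lemma \ref{lem:QU} is met. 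Applying Lemma \ref{lem:QU} to $Q$ and $U$ yields an $r_t \times d$ submatrix of $QU = X_{t+1:n,1:d}$ all of whose $r_t$-minors are non-zero, so $\rank_{\mathbb{L}}(X_{t+1:n,1:d}) \ge r_t$. For the reverse inequality, $\rank_{\mathbb{L}}(QU) \le \rank(Q) = r_t$ trivially, since rank is sub-multiplicative and passing to the extension field $\mathbb{L}$ does not raise the rank of the $\KK$-matrix $Q$. Combining, $\rank_{\mathbb{L}}(B) = \rank_{\mathbb{L}}(X_{t+1:n,1:d}) = r_t$.

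I do not anticipate a serious obstacle here: the lemma is essentially a bookkeeping consequence of Lemma \ref{lem:QU} once one recognizes $X_{t+1:n,1:d}$ as $QU$. The only points requiring a little care are (i) verifying the row-index identity $\big((P-J_n^t)U\big)_{t+1:n,1:d} = (P-J_n^t)_{t+1:n,1:n}\,U$, which is immediate because row selection commutes with right multiplication, and (ii) confirming that $r_t \le d$ so that Lemma \ref{lem:QU} applies — this is guaranteed by the standing hypothesis $r_t \le d - 2t$ together with $t \ge 0$. With these in hand the proof is a couple of lines.
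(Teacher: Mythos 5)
Your proof is correct and follows essentially the same route as the paper: reduce to $\rank_{\mathbb{L}}(X_{t+1:n,1:d})$, identify this block as $QU$ with $Q=(P-J_n^t)_{t+1:n,1:n}$ of rank $r_t\le d-2t\le d$, and invoke Lemma \ref{lem:QU}. Your only addition is to spell out the trivial upper bound $\rank_{\mathbb{L}}(QU)\le\rank(Q)$, which the paper leaves implicit.
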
  
\begin{proof}
By the definition of $B$, we have $\rank_{\mathbb{L}}(B) = \rank_{\mathbb{L}}(X_{t+1:n,1:d}).$ By the definition of $X$, we have $X_{t+1:n,1:d} = QU$, where $Q$ is the submatrix of $P-J_n^t$ composed by its last $n-t$ rows. By definition, $r_t = \rank_{\mathbb{L}}(Q)$. By hypothesis, $r_t \le d-2t$. Thus by Lemma \ref{lem:QU}, $\rank_{\mathbb{L}}(QU) = r_t$.
\end{proof}  

Lemmas \ref{lem:rank-A} and \ref{lem:rank-B} imply 
$$\rank_{\mathbb{L}}(A) + \rank_{\mathbb{L}}(B) = d+t+r_t.$$ \noindent It remains to prove that this value is also the rank of the horizontal and vertical concatenation of $A$ and $B$; this is done in the remaining two lemmas.

\begin{lemma} \label{lem:rank-horizontal}
We have $\rank_{\mathbb{L}} [A, \, B] = d+2t+r_t$.
\end{lemma}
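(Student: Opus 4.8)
The plan is to show $\rank_{\mathbb{L}}[A,\,B] = d+2t+r_t$ by producing matching upper and lower bounds. For the upper bound, note that $[A,\,B]$ is an $n \times (2d)$ matrix whose columns, up to reordering, consist of the $d$ columns of $U$, the $d-t$ columns forming $\left[\begin{smallmatrix} X_1+Y_1 \\ 0 \end{smallmatrix}\right] + \left[\begin{smallmatrix} 0 \\ X_3 \end{smallmatrix}\right]$, the $t$ columns $\left[\begin{smallmatrix} X_2 \\ Y_4 \end{smallmatrix}\right] + \left[\begin{smallmatrix} 0 \\ X_4 \end{smallmatrix}\right]$, and then the $d$ columns of $B$ itself, i.e. $\left[\begin{smallmatrix} 0 \\ X_{t+1:n,1:d} \end{smallmatrix}\right]$. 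But the first $2d-t$ of these span a space contained in $\C_{\mathbb{L}}(A) + \C_{\mathbb{L}}(B)$, and in fact one can argue more economically: by subtracting column-blocks of $B$ from the corresponding blocks of $A$ (elementary column operations on $[A,\,B]$), one reduces $[A,\,B]$ to $\bigl[\,U,\; \left[\begin{smallmatrix} X_1+Y_1 & X_2 \\ 0 & Y_4 \end{smallmatrix}\right],\; \left[\begin{smallmatrix} 0 \\ X_{t+1:n,1:d} \end{smallmatrix}\right]\,\bigr]$ with the middle block now having its bottom rows cleared against $B$; the point is that $\rank_{\mathbb{L}}[A,\,B] \le \rank_{\mathbb{L}}(A) + \rank_{\mathbb{L}}(B) = d+2t+r_t$ is immediate, so only the lower bound requires work.

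For the lower bound I would exhibit an explicit $(d+2t+r_t)$-minor of $[A,\,B]$ that is non-zero over $\mathbb{L}$. The natural candidate combines the structures already used in Lemmas \ref{lem:rank-A} and \ref{lem:rank-B}: take the $d+2t$ rows-and-columns witnessing $\rank_{\mathbb{L}}(A) = d+2t$ — namely the columns of $U$ together with the rightmost $t$ columns of $X_1+Y_1$ and the $t$ columns $\left[\begin{smallmatrix} X_2 \\ Y_4\end{smallmatrix}\right]$, restricted to a suitable set of $d+2t$ rows — and adjoin $r_t$ further columns drawn from the block $X_{t+1:n,1:d}$ of $B$ together with $r_t$ further rows from the range $t+1,\dots,n$, chosen so that the corresponding $r_t \times r_t$ submatrix of $QU$ is the non-zero one furnished by Lemma \ref{lem:QU}. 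Arranging the rows so that the $A$-part occupies the top and the new $B$-rows occupy the bottom, the minor acquires a block-triangular form $\left[\begin{smallmatrix} A\text{-block} & * \\ 0 & (QU)\text{-block} \end{smallmatrix}\right]$, to which \eqref{rk_tri} — or rather the exact block-triangular determinant formula — applies, giving a product of the $A$-minor and the $(QU)$-minor. The key arithmetic check is that after removing the $r_t$ chosen rows the $A$-block still has full rank $d+2t$; this is plausible because $A$ has $n$ rows and $d+2t \le d + d = 2d \le n$ with room to spare, but it must be verified using the specific Toeplitz/anti-diagonal monomial structure of Lemma \ref{minors_li} rather than a generic position argument, since $\mathbb{L}$-entries are not independent across the two blocks.

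The main obstacle is exactly this compatibility of the two row-selections: the rows that realize the $d+2t$ rank of $A$ (via the Toeplitz submatrix $[U_{t+1:n,1:d},\,Y_4]$ and the minor of $X_1+Y_1$ supported on rows $1,\dots,t$) partially overlap the rows indexed $t+1,\dots,n$ from which we need $r_t$ additional independent rows of $QU$. I expect the clean way around this is to observe that $[U_{t+1:n,1:d},\,Y_4]$ is an $(n-t)\times(d+t)$ generic Toeplitz matrix with $d+t \le n-t$, so it has full column rank $d+t$ using only $d+t$ of its $n-t$ rows, leaving $n-t-(d+t) = n - d - 2t \ge r_t$ spare rows (using $r_t \le d-2t$ and $n \ge 2d$); among those spare rows Lemma \ref{lem:QU}'s argument, applied to $Q$ restricted to the complementary rows, still produces a non-zero $r_t$-minor of the corresponding submatrix of $QU$ because the relevant Cauchy--Binet expansion picks up a non-vanishing $r_t$-minor of $Q$ on some column set, and Lemma \ref{minors_li} guarantees the associated $U$-minors stay linearly independent. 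Assembling the chosen rows — $t$ rows from $1,\dots,t$, then $d+t$ rows realizing the Toeplitz rank, then $r_t$ of the spare rows — and the chosen columns in block-triangular order yields the desired non-zero $(d+2t+r_t)$-minor, completing the proof.
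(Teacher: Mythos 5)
There is a genuine gap in the lower-bound argument, and it is not the obstacle you identified. Your plan hinges on the selected $(d+2t+r_t)$-square submatrix of $[A,\,B]$ acquiring a block-triangular form $\left[\begin{smallmatrix} A\text{-block} & * \\ 0 & (QU)\text{-block}\end{smallmatrix}\right]$ so that the determinant factors. But the off-diagonal block that would need to vanish consists of the columns of $U$, $\left[\begin{smallmatrix}X_2\\ Y_4\end{smallmatrix}\right]$ and $U_{t+1:n,1:d}$ restricted to the $r_t$ extra rows chosen for the $QU$-part; these are generic Toeplitz entries and are not zero, so no ordering of your rows and columns produces the claimed triangular structure (the $B$-columns vanish only on rows $1,\dots,t$, which does give the outer $t\times t$ factor $\det(X_1+Y_1)$, but not the inner factorization). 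If you instead try to salvage the argument by a generalized Laplace expansion of the inner $(d+t+r_t)$-determinant along the $B$-columns, you get $\sum_{\mathcal{R}} \pm \det\big((QU)_{\mathcal{R}}\big)\det\big(T_{\mathcal{R}^c}\big)$ where $T=[U_{t+1:n,1:d},\,Y_4]$; Lemma \ref{minors_li} gives linear independence of the $T$-minors over $\KK$, but the coefficients $\det\big((QU)_{\mathcal{R}}\big)$ are themselves polynomials in the same variables, so linear independence over $\KK$ does not let you conclude the sum is non-zero. (Your ``main obstacle'' about overlapping row selections is actually harmless: every $(d+t)$-minor of $T$ is non-zero, so the rows can be chosen in either order.)

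The missing idea, which is how the paper proceeds, is to first left-multiply the bottom $n-t$ rows by an invertible matrix $Q'$ over $\KK$ that row-reduces $(P-J_n^t)_{t+1:n,1:n}$ so that only its last $r_t$ rows are non-zero. Since $X_3$, $X_4$ and the $B$-block all equal $(P-J_n^t)_{t+1:n,1:n}U$ on appropriate columns, this genuinely clears the top $n-t-r_t$ rows of those blocks and produces a true block lower-triangular matrix $\left[\begin{smallmatrix} H & 0 \\ * & G\end{smallmatrix}\right]$, where $G$ has rank $r_t$ and $H$ is the top $(n-t-r_t)\times(d+t)$ part of $Q'T$. The price is that $H$ is no longer Toeplitz, so one must still argue $\rank(H)=d+t$: the paper does this by noting that the vector of $(d+t)$-minors of $Q'T$ is an invertible $\KK$-linear image of the vector of $(d+t)$-minors of $T$, so linear independence (hence non-vanishing) of minors is preserved, and $d+t\le n-t-r_t$ guarantees $H$ has a $(d+t)$-minor at all. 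Without this row-reduction step (or an equivalent device), your candidate minor does not factor and the proof does not close.
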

\begin{proof}
By \eqref{rk_tri} and the definitions of $A$ and $B$,
\begin{align*}
\rank_{\mathbb{L}} [A, \, B] &= \rank_{\mathbb{L}} \hconcat \\
    &\geq  \rank_{\mathbb{L}} (X_1+Y_1) 
    +  \rank_{\mathbb{L}} 
            [U_{t+1:n,1:d}, \, Y_4, \, X_3, \, X_4].
\end{align*} As in the proof of Lemma \ref{lem:rank-A}, the rank of $X_1+Y_1$ is $t$. We view the second matrix as the horizontal concatenation of the two matrices $[U_{t+1:n,1:d}, \, Y_4]$ and $[X_3, \, X_4]$. By Lemma \ref{lem:rank-B} the rank of $[X_3, \, X_4]$ is $r_t$. By the proof of Lemma \ref{lem:rank-B}, there exists an invertible $(n-t) \times (n-t)$ matrix $Q$ with entries in $\KK$, such that only the last $r_t$ rows of $Q(P-J_n^t)_{t+1:n,1:n}$ are non-zero; thus the same will hold true for the matrix $[QX_3, \, Q X_4]=Q(P-J_n^t)_{t+1:n,1:n}U$. As noted in the proof of Lemma \ref{lem:rank-A}, the matrix $[U_{t+1:n,1:d}, \, Y_4]$ is Toeplitz of size $(n-t) \times (d+t)$, so that the matrix $[Q U_{t+1:n,1:d}, \, QY_4, \, Q X_3, \, Q X_4]$ is block lower triangular, with the the top left block being of size $(n-t-r_t) \times (d+t)$; denote that block by $H$. Now, the hypothesis asserts that $r_t \le d-2t$; as $n \ge 2d$, we have $r_t \le n-d-2t$ or equivalently, $d+t \le n-t-r_t$. In other words, the set of $(d+t)$-minors of $H$ is non-empty. By the functorial nature of the exterior power, the set of $(d+t)$-minors of $Q [V_{t+1:n,1:d}, \, Y_4]$, viewed as a vector, is obtained from the set of $(d+t)$-minors of $[V_{t+1:n,1:d}, \, Y_4]$ via multiplication by an invertible matrix with entries in $\KK$. As the set of $(d+t)$-minors of $[V_{t+1:n,1:d}, \, Y_4]$ is linearly independent over $\KK$ by Lemma \ref{minors_li}, the same will be true for the $(d+t)$-minors of $Q[V_{t+1:n,1:d}, \, Y_4]$; in particular, every $(d+t)$-minor of $Q[V_{t+1:n,1:d}, \, Y_4]$ will be non-zero. It follows that there is a non-zero $(d+t)$-minor of $H$, so that $$\rank_{\mathbb{L}}[U_{t+1:n,1:d}, \, Y_4, \, X_3, \, X_4] \ge d+t+r_t.$$ In fact, since $[U_{t+1:n,1:d}, \, Y_4]$ has size $(n-t) \times (d+t)$, the above inequality is equality. We have shown that $$\rank_{\mathbb{L}} [A, \, B] \ge d+2t+r_t,$$ and since by Lemmas \ref{lem:rank-A} and \ref{lem:rank-B} $$d+2t+r_t = \rank_{\mathbb{L}}(A)+\rank_{\mathbb{L}}(B) \ge \rank_{\mathbb{L}}[A, \, B],$$ equality must hold.
\end{proof}

\begin{lemma} \label{lem:rank-vertical}
We have $\rank_{\mathbb{L}} \begin{bmatrix} A \\ B \end{bmatrix} = d+2t+r_t$.
\end{lemma}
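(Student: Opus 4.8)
The plan is to mirror the argument of Lemma \ref{lem:rank-horizontal}, now working with the $2n \times 2d$ vertical concatenation
\[
\begin{bmatrix} A \\ B \end{bmatrix} = \vconcat,
\]
after the obvious row rearrangement that groups together the rows indexed $1{:}t$ of $A$, the rows indexed $t{+}1{:}n$ of $A$, and the nonzero rows of $B$ (namely $X_{t+1:n,1:d}$). Since $\rank_{\mathbb{L}}(A) + \rank_{\mathbb{L}}(B) = d+t+r_t$ by Lemmas \ref{lem:rank-A} and \ref{lem:rank-B}, and since the vertical concatenation always has rank at least $\max\{\rank_{\mathbb{L}}(A),\rank_{\mathbb{L}}(B)\}$ and at most $\rank_{\mathbb{L}}(A)+\rank_{\mathbb{L}}(B)$, it suffices to produce a nonzero $(d+2t+r_t)$-minor. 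As in the horizontal case, I would first apply a $\KK$-invertible transformation: there is an invertible $(n-t)\times(n-t)$ matrix $Q$ over $\KK$ such that $Q(P-J_n^t)_{t+1:n,1:n}$ has only its last $r_t$ rows nonzero, hence $Q X_{t+1:n,1:d} = Q(P-J_n^t)_{t+1:n,1:n}U$ has only $r_t$ nonzero rows; applying $Q$ simultaneously to the block $\begin{bmatrix} U_{t+1:n,1:d} & 0 & Y_4\end{bmatrix}$ (the middle block-row of $A$) and to the bottom block-row $\begin{bmatrix} 0 & X_3 & X_4\end{bmatrix}$ only multiplies the relevant row-block by something invertible over $\KK$ and does not change ranks of concatenations.

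Next I would exhibit the block-triangular structure. After the transformation, the bottom $n-t-r_t$ rows coming from $QX_3$, $QX_4$ vanish, so the combined matrix has, in its first $d$ columns, the $t$ rows $U_{1:t,1:d}$ on top, then the $n-t$ rows $QU_{t+1:n,1:d}$, and then $r_t$ further nonzero rows; and the recognition is that $\begin{bmatrix} U_{1:t,1:d} \\ U_{t+1:n,1:d}\end{bmatrix} = U$ is the full $n\times d$ Toeplitz matrix. The key submatrix to analyze is the $n \times (d+2t)$ block consisting of columns $1{:}d$ (giving $U$), columns $d{+}1{:}d{+}t$ (giving $\begin{bmatrix} X_1+Y_1 \\ 0 \end{bmatrix}$ from the $A$-rows, together with the $X_3$-part from the bottom), and columns $d{+}t{+}1{:}d{+}2t$ (the $X_2$/$Y_4$/$X_4$ columns). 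By inequality \eqref{rk_tri} applied with the split "rows indexed $1{:}t$" versus "the rest", the rank is at least $\rank_{\mathbb{L}}(X_1+Y_1) + \rank_{\mathbb{L}}\!\big[\,U_{t+1:n,1:d},\ Y_4,\ X_3\text{-rows},\ X_4\text{-rows}\,\big]$, where again $\rank_{\mathbb{L}}(X_1+Y_1)=t$ by the $(x_{-d+t})^t$ argument from Lemma \ref{lem:rank-A}. The inner matrix, after applying $Q$, is block lower triangular with top-left block $H$ of size $(n-t-r_t)\times(d+t)$ equal to $Q\begin{bmatrix} U_{t+1:n,1:d} & Y_4\end{bmatrix}$ restricted to its first $n-t-r_t$ rows; since $\begin{bmatrix} U_{t+1:n,1:d} & Y_4\end{bmatrix}$ is an $(n-t)\times(d+t)$ Toeplitz matrix with algebraically independent diagonals and $n\ge 2d$ forces $d+t\le n-t-r_t$, Lemma \ref{minors_li} together with the functoriality of exterior powers (multiplication of the minor-vector by a $\KK$-invertible matrix) guarantees a nonzero $(d+t)$-minor of $H$. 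This yields $\rank_{\mathbb{L}}[\,U_{t+1:n,1:d},Y_4,X_3,X_4\,] \ge d+t+r_t$, hence the vertical concatenation has rank $\ge d+2t+r_t$, and equality follows from the upper bound $\rank_{\mathbb{L}}(A)+\rank_{\mathbb{L}}(B)$.

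The main obstacle I anticipate is purely bookkeeping: keeping straight which rows of the $2n\times 2d$ matrix come from $A$ versus $B$, and verifying that the transformation by $Q$ can be applied consistently to all row-blocks at once so that the block-lower-triangular form genuinely appears — in the vertical case the "extra" rows $X_{t+1:n,1:d}$ of $B$ sit in the \emph{same} column range $1{:}d$ as $U$, rather than in the last $d$ columns as in the horizontal case, so I must check that after zeroing out the bottom $n-t-r_t$ rows of $QX_3,QX_4$ the remaining structure still has the correct triangular shape with $H$ appearing as claimed. Once the shape is confirmed, the rank count is identical to that of Lemma \ref{lem:rank-horizontal}, and Lemma \ref{rk_concat} then immediately gives $\rank_{\mathbb{L}}[U,PU-UJ_d^t] = \rank_{\mathbb{L}}(A)+\rank_{\mathbb{L}}(B) = d+t+r_t$, completing the proof of Theorem \ref{thm:MainTechnical} via \eqref{eq:ElemColOper}.
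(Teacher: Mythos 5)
Your proposal has a genuine gap: it transplants the horizontal argument into a situation where its key structural feature is absent. In $[A,\,B]$ the blocks $U_{t+1:n,1:d}$, $Y_4$, $X_3$, $X_4$ all occupy the \emph{same} rows $t+1{:}n$, which is why applying $Q$ to those rows simultaneously triangularizes everything, and why \eqref{rk_tri} applies there with the split ``rows $1{:}t$ versus the rest'' (the block below $X_1+Y_1$ is genuinely zero). In the vertical concatenation this is no longer true: after deleting zero rows the matrix is
\begin{align*}
\begin{bmatrix}
U_{1:t,1:d} &  X_1+Y_1 & X_2 \\
U_{t+1:n,1:d} & 0 & Y_4 \\
0 & X_3 & X_4
\end{bmatrix},
\end{align*}
so $X_3$ sits in the \emph{same columns} $d{+}1{:}2d{-}t$ as $X_1+Y_1$ but in rows disjoint from those of $U_{t+1:n,1:d}$ and $Y_4$. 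Consequently: (i) the split ``rows $1{:}t$ versus the rest'' does not satisfy the hypothesis of \eqref{rk_tri}, because the block below $X_1+Y_1$ is $\left[\begin{smallmatrix} 0\\ X_3\end{smallmatrix}\right]\neq 0$, so your inequality $\rank \ge \rank(X_1+Y_1)+\rank[U_{t+1:n,1:d},Y_4,X_3,X_4]$ is unjustified (the horizontal concatenation $[U_{t+1:n,1:d},Y_4,X_3,X_4]$ is not even a submatrix of the vertical stack); and (ii) applying $Q$ to the $B$-rows only transforms $[0,\,X_3,\,X_4]$ and cannot produce the block-lower-triangular matrix with top-left block $H=Q[U_{t+1:n,1:d},\,Y_4]$ that your argument relies on, since $Q$ would have to act on two distinct row-blocks at once for that shape to appear. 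The ``bookkeeping obstacle'' you flag in your last paragraph is in fact fatal to the argument as written.

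The correct application of \eqref{rk_tri} here uses the zero block $0_{(n-t)\times(d-t)}$ in the middle block-row, giving
\begin{align*}
\rank_{\mathbb{L}}\begin{bmatrix}A\\B\end{bmatrix}\ \ge\ \rank_{\mathbb{L}}[U_{t+1:n,1:d},\,Y_4]+\rank_{\mathbb{L}}\begin{bmatrix}X_1+Y_1\\ X_3\end{bmatrix},
\end{align*}
and the genuinely new step your proposal is missing is the proof that $\rank_{\mathbb{L}}\left[\begin{smallmatrix}X_1+Y_1\\ X_3\end{smallmatrix}\right]=t+r_t$; this does not follow from the ranks of the two pieces separately. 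The paper obtains it by observing that the variable $x_{-t}$ occurs only on the diagonal of $Y_1$ and nowhere in $X_1$ or $X_3$, and by using Lemma \ref{lem:QU} to extract an $r_t\times(d-t)$ row-submatrix of $X_3$ all of whose $r_t$-minors are non-zero; a suitable $(t+r_t)$-minor of the stacked matrix then contains the monomial $x_{-t}^t$ multiplied by a non-zero $r_t$-minor of $X_3$, hence is non-zero. Without an argument of this kind your proof does not close.
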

\begin{proof}
By \eqref{rk_tri}, we have the inequality
\begin{align*}
\rank_{\mathbb{L}} \begin{bmatrix} A \\ B \end{bmatrix} &= \rank_{\mathbb{L}} \vconcat  \\
& \geq \rank_{\mathbb{L}} \begin{bmatrix}
    X_1+Y_1 \\ X_3
\end{bmatrix} + \underbrace{\rank_{\mathbb{L}}[U_{t+1:n,1:d}, \, Y_4]}_{=d+t}.
\end{align*} Now observe that the variable $x_{-t}$ appears only in the diagonal $(k, k)$ of $Y_1$ for $k=1,\dots,t$ and does not appear at all in $X_1$ or $X_3$. By Lemma \ref{lem:QU}, there is an $r_t \times (d-t)$ row-submatrix of $X_3$ for which every $r_t$-minor is non-zero. Let $\I = \{i_1<\cdots <i_{r_t}\}$ be the row indices indexing this submatrix. Then 
\begin{align*}
\det \left( 
\begin{bmatrix}
    X_1+Y_1 \\ X_3
\end{bmatrix}_{\{1...t\}\cup \I, \{1...t\} \cup \{t+1,\dots,t+r_t\}}
\right) \neq 0
\end{align*} because the monomial $x_{-t}^t$ appears (up to sign) with coefficient a non-zero $r_t$-minor of $X_3$. As the rank of $X_1+Y_1$ is $t$ and the rank of $X_3$ is $r_t$, this establishes 
$$ \rank_{\mathbb{L}} \begin{bmatrix}
    X_1+Y_1 \\ X_3
\end{bmatrix} = t+r_t.$$ With this, we conclude as in the proof of Lemma \ref{lem:rank-horizontal}. 
\end{proof}

%%%%%%%%%%%%%
\section{Remaining Proofs} \label{section:Rest-Proofs}

\subsection{Proof of Theorem \ref{thm:US}} \label{subsection:Proof-thm:US}

Observe that the property $\operatorname{USP}(\V,\pi)$ is equivalent to saying that $\V$ meets its image $\pi(\V)$ only inside the eigenspace of $\pi$ associated to eigenvalue $1$; i.e. 
\begin{align} 
\operatorname{USP}(\V,\pi) \, \, \, \Leftrightarrow \, \, \,  \V \cap \pi(\V)  = \mathcal{N}(I-\Pi) \cap \V; \label{eq:USP-equivalence}
\end{align} here $I$ is the $n \times n$ identity matrix, $\Pi$ is the permutation matrix that represents $\pi$, and $\mathcal{N}(I-\Pi)$ is the right nullspace of the matrix $I -\Pi$. Set $r_0:= \rank(I-\Pi)$; this is the codimension of the eigenspace of $\pi$ associated to eigenvalue $1$, that is $r_0 = \codim \mathcal{N}(I-\Pi)$. By Lemma \ref{dim_aug_Toep_matrix} and for $V$ generic Toeplitz, the vector space $\mathcal{N}(I-\Pi) \cap \V$ has the expected dimension, i.e. 
$$\dim \big(\mathcal{N}(I-\Pi) \cap \V \big) = \max\{d-r_0,0\}.$$ Hence, as there is always an inclusion of vector spaces 
$$\mathcal{N}(I-\Pi) \cap \V \subseteq \V \cap \pi(\V),$$ and after expressing via Grassman's formula the dimension of $\V \cap \pi(\V)$ in terms of the dimension of $\V + \pi(\V) = \mathcal{C}([V, \, \Pi V])$, where $\C([V, \, \Pi V])$ indicates the column-space of the $n \times 2d$ matrix $[V, \, \Pi V]$, the conditions in \eqref{eq:USP-equivalence} become equivalent to   
\begin{align}
\rank [V, \, \Pi V] =2d - \max\{d-r_0,0\}. \label{eq:USP-equivalence-matrix}
\end{align}
%If $r_0\leq d$, then for generic Toeplitz subspaces we have $\dim \ker(I-P)\cap \mathcal{X} = d - r_0$ (as we will see in lemma \ref{dim_aug_Toep}),
%so $\textup{hsp}(\mathcal{X},\{I,P\})$ is equivalent to a statement on matrices: $ \rk(V,PV) = d + r_0$ for generic Toeplitz matrices $V$ of size $n\times d$;
%if $r_0>d$, then for generic Toeplitz subspaces we have $\dim \ker(I-P)\cap \mathcal{X} = 0$,
%so $\textup{hsp}(\mathcal{X},\{I,P\})$ is equivalent to a statement on matrices: $ \rk(V,PV) = 2d$ for generic Toeplitz matrices $V$ of size $n\times d$.

We prove that \eqref{eq:USP-equivalence-matrix} holds true. Note that the condition $$r_0:=\rank(\Pi-I)\leq d,$$ is equivalent to the hypothesis of Theorem \ref{thm:MainTechnical} for $t=0$. Thus if $r_0 \le d$, then by Theorem \ref{thm:MainTechnical} we have $$\rank [V, \, \Pi V] = d+r_0 = 2d-\max\{d-r_0,0\}.$$ 

If on the other hand $r_0 > d$, we must show that under the hypotheses of the statement, we have $\rank [V, \, \Pi V] = 2d$. Suppose thus there is a $0 \le t \le \frac{d}{2}$ such that $$r_t:=\rank(\Pi - J^{t})_{t+1:n,1:n} \le d-2t$$ \noindent (the case of negative $t$ is treated similarly). Then Theorem \ref{thm:MainTechnical} asserts that $$\rank [V, \, \Pi V] = d+r_t+2t,$$ and this rank is equal to $2d$ if and only if $r_t = d-2t$.

%%%%%%%%%%%%%
\subsection{Proof of Proposition \ref{prp:cyclic-full-rk}}

Note that for a cyclic permutation $r_0 = n-1$. Thus by \eqref{eq:USP-equivalence-matrix}, and since by hypothesis $n \ge 2d$, we have that $\operatorname{USP}\big(\C(V),\pi^t\big)$ holds if and only if $\rank \left[ V, \, P^{t} V \right] = 2d$.

Note that $(\Pi^t - J^{t})_{t+1:n,1:n}$ is the zero matrix. Thus if $t \le \frac{d}{2}$, the hypothesis of Theorem \ref{thm:MainTechnical} holds true with $r_t = 0$, whence $\rank [V, \, \Pi V] = d+2t$. Hence $\operatorname{USP}\big(\C(V),\pi^t\big)$ fails for $t < \frac{d}{2}$ but holds for $t = \frac{d}{2}$.

To finish the proof, we will prove that for $t>\frac{d}{2}$, the maximal minor of $[U, \, \Pi^tU]$ corresponding to the first $2d$ rows is non-zero. Refer to the following partition: 
\begin{equation*}
    [U, \, \Pi^t U] = 
\begin{tikzpicture}[baseline={-0.5ex},mymatrixenv]
    \matrix [mymatrix,inner sep=4pt] (m)  
    {
            *        &U_{1:t,d-t+1:d}&       *          &     *       \\
            *        &     *       &       *          &U_{1:t,d-t+1:d}\\
    U_{2t+1:2d,1:d-t}&     *       &U_{t+1:2d-t,1:d-t}&     *       \\
            *        &     *       &       *          &     *       \\
    };
    % Braces     
    \mymatrixbraceleft{1}{1}{$t$}
    \mymatrixbraceleft{2}{2}{$t$}
    \mymatrixbraceleft{3}{3}{$2d-2t$}
    \mymatrixbraceleft{4}{4}{$n-2d$}
    \mymatrixbracetop{1}{1}{$d-t$}
    \mymatrixbracetop{2}{2}{$t$}
    \mymatrixbracetop{3}{3}{$d-t$}
    \mymatrixbracetop{4}{4}{$t$}
    %\mymatrixbracebottom{3}{3}{$F'$}
    %\mymatrixbracebottom{4}{4}{$F''$}
    %\mymatrixbraceleft{3}{3}{$E'$}
    %\mymatrixbraceleft{4}{4}{$E''$}
\end{tikzpicture}
\end{equation*}

\noindent The first key observation is that the variable $x_{t-d}$ occurs only along the diagonal of the two blocks $U_{1:t,d-t+1:d}$ of $[U, \, \Pi^t U]$. Regarding the aforementioned maximal minor as a polynomial of $x_{t-d}$, the coefficient of $x_{t-d}^{2t}$ is $\det(S)$, where $$\underbrace{S}_{(2d-2t)\times (2d-2t)} = [U_{2t+1:2d,1:d-t}, \, U_{t+1:2d-t,1:d-t}].$$ The second key observation is that $$U_{t+1:2d-t,1:d-t} = U_{2t+1:2d,t+1:d},$$ combined with the fact that when $d-t<t+1$, then $S$ is a square submatrix of the Toeplitz matrix $U$, which is thus of non-zero determinant by Lemma \ref{minors_li}.

%%%%%%
\subsection{Proof of Theorem \ref{thm:d-polynomials}}

The proof of Theorem 3 in \cite{tsakiris2020algebraic}, which we will not repeat here, asserts that for any $\tilde{y} \in \KK^n$, the polynomial system of equations \begin{align*}
p_\ell(Ax) = p_\ell(\tilde{y}), \, \, \, \, \, \,  \ell=1,\dots,d 
\end{align*} is consistent with at most $d!$ solutions in $\overline{\KK}$, for any $n \times d$ matrix $A$ (not necessarily Toeplitz) with entries in\footnote{The statement in \cite{tsakiris2020algebraic} is in fact is over $\Re$, but the proof is valid for any infinite field $\KK$.} $\KK$, as long as the polynomials 
\begin{align}
p_1(Ax),\dots,p_d(Ax) \label{eq:p-regular}
\end{align} form a regular sequence \cite{Eisenbud} (see also Appendix D in \cite{tsakiris2020algebraic}). Thus by the same arguments as in \cite{tsakiris2020algebraic}, the same statement holds true with $A$ replaced by a Toeplitz matrix $V$, as soon as the polynomials 
\begin{align}
p_1(Vx),\dots,p_d(Vx) \label{eq:p-V-regular}
\end{align} form a regular sequence. 

Note that these polynomials are homogeneous and parametrized by the matrix $V$. Each choice of $V$ is a choice of parameters; thus the parameter space is the affine space $\mathbb{T}_{n \times d}$ of $n \times d$ Toeplitz matrices with entries in $\KK$. Now, quite generally, it is well known that for homogeneous polynomials parametrized by an affine space, the polynomials are a regular sequence in an open subset in the Zariski topology of the parameter space\footnote{For a justification, see the first two paragraphs of the proof of Lemma 3.2 in \cite{Liang2025field}.}. Hence there is an open set $\mathscr{U} \subseteq \mathbb{T}_{n \times d}$, such that for any $V \in \mathscr{U}$ the conclusion of Theorem \ref{thm:d-polynomials} is true. To finish the proof, we must show that $\mathscr{U}$ is not empty; for then it will be dense in $\mathbb{T}_{n \times d}$, as it follows from the elementary properties of the Zariski topology.

Let $V^*$ be the $n \times d$ Toeplitz matrix that consists of the $d \times d$ identity matrix on the top $d \times d$ block, while the bottom $(n-d) \times d$ block is the zero matrix. Then 
$$p_\ell(V^* x) = x_1^\ell + \cdots + x_d^\ell,$$
so that $p_1(V^* x),\dots,p_d(V^* x)$ become the first $d$ power-sum polynomials in variables $x_1,\dots,x_d$. That these are a regular sequence, is well-known (e.g., \cite[Lemma 2]{tsakiris2020algebraic}).

%%%%%%
\subsection{Proof of Theorem \ref{thm:(d+1)-polynomials}}

Theorem 1.1 in \cite{Liang2025field} asserts that, with $n \ge d+1$, if $A$ is a generic $n \times d$ matrix with entries in $\KK$, and $\xi^*$ is a generic $d \times 1$ vector with entries in $\KK$, then the system of polynomial equations
\begin{align}
p_\ell(Ax) = p_\ell(\tilde{y}), \, \, \, \ell=1,\dots,d+1 \label{eq:(d+1)-polynomials-A}
\end{align} has $\xi^*$ as its unique solution. More precisely, with $\mathbb{M}_{n \times d}$ the affine space of $n \times d$ matrices with entries in $\KK$, there exists a dense open set $\mathscr{U}$ in the Zariski topology of $\mathbb{M}_{n \times d}$, such that the said statement holds true for every $A \in \mathscr{U}$. The proof of Theorem 1.1 in \cite{Liang2025field} is lengthy; dissecting it, reveals that the same statement can be proved with $A$ replaced by a generic Toeplitz $V$, as soon as the certain three properties are shown to hold true. In the remaining of the proof here we list and prove these properties. 

First, all maximal minors of an $n \times d$ Toeplitz matrix $V$ with its diagonals defined by algebraically independent elements (just as in Lemma \ref{minors_li}) must be non-zero (the analogue of this for the non-Toeplitz case is needed in the proof of Lemma 2.5 in \cite{Liang2025field}). This is indeed true by virtue of Lemma \ref{minors_li}.

Second, with $V$ as in the previous paragraph, the polynomials $p_\ell(Vx)$ for $\ell=1,\dots,d$ must be a regular sequence in the polynomial ring $\KK(V)[x_1,\dots,x_d]$, where $\KK(V)$ is the field of rational functions in the variables that define the diagonals of $V$ (the analogue of this for the non-Toeplitz case is needed in multiple places in the proof of Theorem 1.1 in \cite{Liang2025field}). This follows from the same argument as in the proof of Lemma 3.2 in \cite{Liang2025field}, together with what we showed in the proof of Theorem \ref{thm:(d+1)-polynomials}, i.e. that these polynomials are a regular sequence upon substituting $V$ with the special matrix whose top $d \times d$ block is the identity and all remaining entries are zero.

Third, and final requirement, is that the codimension of the algebraic variety of $\mathbb{T}_{(d+1) \times d}$ defined by the maximal minors of a $(d+1) \times d$ Toeplitz matrix of variables is at least $2$ (the analogue of this for the non-Toeplitz case is needed in the proof of Lemma 4.8 in \cite{Liang2025field}). The codimension of such Toeplitz (or equivalently Hankel) variety is read from \cite[p.133]{Conca2018Hankel} to be indeed equal to $2$.

%%%%%%%%%%%%%
\section{Examples} \label{section:Examples}

\begin{example}\label{eg:main_tech}
With $d=3$ and $n=6$, consider the permutation $\pi$ represented by
    \[ P=\begin{bmatrix}
        1 & 0 & 0 & 0 & 0 & 0\\
        0 & 0 & 0 & 0 & 0 & 1\\
        0 & 1 & 0 & 0 & 0 & 0 \\
        0 & 0 & 1 & 0 & 0 & 0 \\
        0 & 0 & 0 & 1 & 0 & 0 \\
        0 & 0 & 0 & 0 & 1 & 0
    \end{bmatrix},\] which consists of two cycles, i.e. a fixed point and a cycle of length $5$. We have 
$$ (P - J)_{2:6,1:6} =    
\begin{bmatrix}        
        1 & 0 & 0 & 0 & 0 & 1\\
        0 & 0 & 0 & 0 & 0 & 0 \\
        0 & 0 & 0 & 0 & 0 & 0 \\
        0 & 0 & 0 & 0 & 0 & 0 \\
        0 & 0 & 0 & 0 & 0 & 0
    \end{bmatrix};$$ hence $r_1 = 1$. Thus with $t=1$, $1 = r_t \le d - 2t = 1$; i.e. the hypothesis of Theorem \ref{thm:MainTechnical} holds true for $t=1$. It follows that $\rank [V, \, PV] = d+2t+r_t = 6$ for generic $6 \times 3$ Toeplitz $V$. As $r_t = d-2t$, we have in fact that $\operatorname{USP}(\C(V),\pi)$ holds true by Theorem \ref{thm:US}.      
\end{example}

\begin{example}\label{eg:full_tk}
With $d=3$ and $n=6$, consider the permutation $\pi$ represented by
$$Q = \begin{bmatrix}
0 & 0 & 0 & 1 & 0 & 0\\
0 & 0 & 0 & 0 & 0 & 1\\
0 & 1 & 0 & 0 & 0 & 0 \\
1 & 0 & 0 & 0 & 0 & 0 \\
0 & 0 & 1 & 0 & 0 & 0 \\
0 & 0 & 0 & 0 & 1 & 0
\end{bmatrix}.$$ We have $Q = P^2$, where
$$ P = \begin{bmatrix}
0 & 0 & 0 & 0 & 0 & 1\\
1 & 0 & 0 & 0 & 0 & 0\\
0 & 1 & 0 & 0 & 0 & 0 \\
0 & 0 & 1 & 0 & 0 & 0 \\
0 & 0 & 0 & 1 & 0 & 0 \\
0 & 0 & 0 & 0 & 1 & 0
\end{bmatrix}.$$ Note that $2 = t > \frac{d}{2} = \frac{3}{2}$, so that the hypothesis of Theorem \ref{thm:MainTechnical} is not valid. On the other hand, Proposition \ref{prp:cyclic-full-rk} asserts that $\rank [V, \, QV] = \min\{2d,d+2t\} = 6$ for a $6 \times 3$ generic Toeplitz $V$. In fact, a by hand computation reveals that $$\det [U, \, QU] =  x_5^2 x_3 x_2 x_{1} x_{-1}$$ (here $U$ is a $6 \times 3$ Toeplitz matrix of variables; see the proof of Theorem \ref{thm:MainTechnical} for the definition).  The remark after Proposition \ref{prp:cyclic-full-rk}, also shows that $\operatorname{USP}(\C(V),\pi)$ holds true.
\end{example}

\begin{example}\label{eg:not_covered}
Consider the $3$-cycle 
$$ Q = \begin{bmatrix}
0 & 0 & 1 \\
1 & 0 & 0 \\
0 & 1 & 0
\end{bmatrix}, $$ and with $d=3$ and $n=6$ the permutation $\pi$ represented by 
$$ P = \begin{bmatrix}
Q^2 & 0 \\
0 & Q 
\end{bmatrix} = 
\begin{bmatrix}
0 & 1 & 0 & 0 & 0 & 0\\
0 & 0 & 1 & 0 & 0 & 0\\
1 & 0 & 0 & 0 & 0 & 0 \\
0 & 0 & 0 & 0 & 0 & 1 \\
0 & 0 & 0 & 1 & 0 & 0 \\
0 & 0 & 0 & 0 & 1 & 0
\end{bmatrix}.$$
One checks that $\pi$ does not adhere to the hypotheses of either Theorem \ref{thm:MainTechnical} or Proposition \ref{prp:cyclic-full-rk}. Nevertheless, a by-hand computation reveals that \[ \det[U, \, PU] =  x_5^2 x_3 x_1 x_{0} x_{-1},\]
thus illustrating Conjecture \ref{conj}.
\end{example}

\bibliographystyle{alpha}
\bibliography{Hong-Tsakiris-26}

\end{document}